\documentclass[reqno]{amsproc}
\usepackage{amssymb}
\usepackage{euscript} 
\usepackage{mathrsfs} 
\usepackage{color}


\makeatletter
\@namedef{subjclassname@2010}{%
\textup{2010} Mathematics Subject Classification}
\makeatother

\numberwithin{equation}{section}

\newtheorem{thm}{Theorem}[section]
\newtheorem{cor}[thm]{Corollary}
\newtheorem{lem}[thm]{Lemma}
\newtheorem{pro}[thm]{Proposition}

\newtheorem*{thm*}{Theorem}

\newtheorem*{opq*}{Problem}

\theoremstyle{remark}
\newtheorem{rem}[thm]{Remark}

\theoremstyle{definition}
\newtheorem{exa}[thm]{Example}
\DeclareMathOperator{\E}{e}

\newcommand*{\ascr}{\mathscr{A}}

\newcommand*{\cbb}{\mathbb C}

\newcommand*{\D}{\mathrm{d}}

\newcommand*{\dbbc}{\bar{\mathbb{D}}}

\newcommand*{\Ge}{\geqslant}

\newcommand*{\hh}{\mathcal H}

\newcommand*{\I}{{\mathrm i\hspace{.1ex}}}

\newcommand*{\is}[2]{\langle#1,#2\rangle}
\newcommand*{\jd}[1]{\mathscr N(#1)}

\newcommand*{\kk}{\mathcal K}
\newcommand*{\Le}{\leqslant}
\newcommand*{\mscr}{\mathscr M}
\newcommand*{\nbb}{\mathbb N}

\newcommand*{\nul}{\{0\}}
\newcommand*{\ogr}[1]{\boldsymbol B(#1)}

\newcommand*{\rbb}{\mathbb R}
\newcommand{\re}{{\rm Re\,}}

\newcommand*{\tbb}{{\mathbb T}}

\newcommand*{\zbb}{\mathbb Z}

\hyphenation{equiv-al-ent-ly ortho-nor-mal pro-vided}
   \begin{document}
   \title[Criteria for algebraic operators to be unitary]{Criteria for algebraic operators to be unitary}
   \author[Z.\ J.\ Jab{\l}o\'nski]{Zenon Jan
Jab{\l}o\'nski}
   \address{Instytut Matematyki,
Uniwersytet Jagiello\'nski, ul.\ \L ojasiewicza 6,
PL-30348 Kra\-k\'ow, Poland}
\email{Zenon.Jablonski@im.uj.edu.pl}
   \author[I.\ B.\ Jung]{Il Bong Jung}
   \address{Department of Mathematics, Kyungpook National University,
Da\-egu 41566, Korea} \email{ibjung@knu.ac.kr}
   \author[J.\ Stochel]{Jan Stochel}
\address{Instytut Matematyki, Uniwersytet
Jagiello\'nski, ul.\ \L ojasiewicza 6, PL-30348
Kra\-k\'ow, Poland} \email{Jan.Stochel@im.uj.edu.pl}
   \thanks{The research of the first and third
authors was supported by the NCN
(National Science Center), decision No.\
DEC-2021/43/B/ST1/01651. The second
author was supported by Basic Science
Research Program through the National
Research Foundation of Korea (NRF) funded
by the Ministry of Education
(NRF-2021R111A1A01043569).}
   \subjclass[2020]{Primary 47B15;
Secondary 47B20}
   \keywords{Algebraic operator, unitary
operator, normaloid operator, strong
stability}

   \maketitle
   \begin{abstract}
Criteria for an algebraic operator $T$ on
a complex Hilbert space $\hh$ to be
unitary are established. The main one is
written in terms of the convergence of
sequences of the form
$\{\|T^nh\|\}_{n=0}^{\infty}$ with $h\in
\hh$. Related questions are also
discussed.
   \end{abstract}
   \section{Introduction}
By the spectral theorem, a unitary
operator with a finite spectrum is
algebraic and its spectrum is contained
in $\tbb$, the unit circle centered at
$0$. The most fundamental example of a
unitary algebraic operator is the Fourier
transform. According to the famous
theorem of Plancherel, the Fourier
transform extends uniquely to a unitary
operator on $L^2(\rbb)$ (see e.g.,
\cite[Theorem~IX.6]{R-S}). Denote it by
$\mathscr F$. The Fourier transform
$\mathscr F$ has the following
properties:
   \begin{align*}
\text{$\mathscr F^0=I$, $\mathscr
F^1=\mathscr F$, $\mathscr F^2 = P$,
$\mathscr F^3 = \mathscr F^{-1}$ and
$\mathscr F^4=I$,}
   \end{align*}
where $I$ is the identity operator and
$P(f)(x) = f(-x)$. This implies that
$p(x)=x^4$ is the minimal polynomial of
$\mathscr F$. As a consequence, the
Fourier transform is a unitary algebraic
operator with (purely point) spectrum
$\sigma(\mathscr F)=\{1,-1,\I,-\I\}$ (see
\cite[Theorems~IX.1 and IX.6]{R-S}).

A natural question arises under what
additional assumptions an algebraic
(bounded linear) operator $T$ on a
complex Hilbert space $\hh$ with spectrum
in $\tbb$ is unitary. To answer this
question let us look at some broader
classes $\ascr$ of operators that can be
characterized as follows: an operator $T$
belongs to $\ascr$ if and only if the
sequences of the form $\{\|T^n
h\|^2\}_{n=0}^{\infty}$ ($h\in \hh$)
belong to the corresponding class
$\mathscr S$ of scalar sequences; in most
cases, the class $\mathscr S$ appears
naturally in harmonic analysis on
$*$-semigroups. In particular, the
celebrated theorem of Lambert states that
the class of subnormal operators
corresponds to Stieltjes moment sequences
(see \cite{lam}). In this line of
correspondence, we can list the classes
of $m$-isometric operators
\cite{Ag-St1,Ag-St2,Ag-St3,J-J-S20},
completely hypercontractive operators
\cite{Ag85}, completely hyperexpansive
operators \cite{At}, alternatingly
hyperexpansive operators \cite{Sh-At},
conditionally positive definite operators
\cite{J-J-S22}, and so on. The answer to
our question (see Theorem~\ref{algunit}
below) is written in terms of the
convergence of the sequences of the form
$\{\|T^n h\|\}_{n=0}^{\infty}$ ($h\in
\hh$). The condition of their convergence
seems to be optimal, since in the light
of Remark~\ref{lidw} the assumption of
their boundedness ceases to be
sufficient. It is also worth mentioning
that there are contractions (for which
the sequences $\{\|T^n
h\|\}_{n=0}^{\infty}$, $h\in \hh$,
automatically converge) with spectrum in
$\tbb$, called unimodular contractions,
which are not unitary (see \cite{Rus68}).
Clearly, unimodular contractions are
normaloid. Let us further note that if we
replace the class of normaloid operators
by a class of more regular operators, it
may turn out that members of the latter
class with spectrum in $\tbb$ are
unitary. In particular, by Stampfli's
theorem (see \cite[Corollary, p.\
473]{Sta65}), every hyponormal operator
with spectrum in $\tbb$ is unitary.

Before formulating the main result, we
establish some notation and terminology.
Denote by $\cbb$ the field of complex
numbers. Set $\tbb=\{z\in \cbb \colon
|z|=1\}$. Write $\nbb$, $\zbb_+$ and
$\rbb_+$ for the sets of positive
integers, nonnegative integers and
nonnegative real numbers, respectively.
Let $\ogr{\hh}$ stand for the
$C^*$-algebra of all bounded linear
operators on a complex Hilbert space
$\hh$. For $T\in \ogr{\hh}$, denote by
$\jd{T}$, $\sigma(T)$ and $r(T)$ the
kernel, the spectrum and the spectral
radius of $T$, respectively. An operator
$T \in \ogr{\hh}$ is said to be {\em
normaloid} if $r(T)=\|T\|$, or
equivalently, by Gelfand's formula for
spectral radius, if and only if
$\|T^n\|=\|T\|^n$ for all $n\in\nbb$.
Call $T \in \ogr{\hh}$ {\em algebraic} if
there exists a nonzero polynomial $p$ (in
one indeterminate with complex
coefficients) such that $p(T)=0$; such a
$p$ is said to be {\em minimal} if $p$ is
the (unique) monic polynomial of least
degree among all nonzero polynomials $q$
such that $q(T)=0$.

The following theorem, which is the main
result of the paper, characterizes
unitary algebraic operators in terms of
the convergence of the sequences of the
form $\{\|T^n h\|\}_{n=0}^{\infty}$. Its
proof is given in Section~\ref{Sec.3}.
   \begin{thm} \label{algunit}
Suppose that $T\in\ogr{\hh}$ is algebraic and
$\sigma(T) \subseteq \tbb$. Then the following
statements are equivalent{\em :}
   \begin{enumerate}
   \item[(i)] $T$ is unitary,
   \item[(ii)] $T$ is normaloid,
   \item[(iii)] $\|T\|\Le 1$ $($or equivalently, $\|T\|=1$$)$,
   \item[(iv)] the sequence $\{\|T^n h\|\}_{n=0}^{\infty}$
is convergent in $\rbb_+$ for every $h\in \hh$.
   \end{enumerate}
   \end{thm}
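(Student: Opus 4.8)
The plan is to prove the cycle of implications, disposing of the easy equivalences first and then isolating the two substantive ones, (iii)$\Rightarrow$(i) and (iv)$\Rightarrow$(i), which I will run through a common mechanism. First I would record the algebraic structure. Writing the minimal polynomial as $p(z)=\prod_{j=1}^{k}(z-\lambda_j)^{m_j}$ with the $\lambda_j$ distinct, we have $\sigma(T)=\{\lambda_1,\dots,\lambda_k\}\subseteq\tbb$, hence $r(T)=1$, and the primary decomposition supplies bounded idempotents $E_1,\dots,E_k$, each a polynomial in $T$, with $\sum_j E_j=I$, $E_iE_j=\delta_{ij}E_j$ and $\ob{E_j}=\jd{(T-\lambda_j)^{m_j}}$. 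On $\ob{E_j}$ one has $T=\lambda_j+N_j$ with $N_j=(T-\lambda_j)E_j$ nilpotent of index exactly $m_j$ (by minimality of $p$), so $T=\sum_j(\lambda_jE_j+N_j)$. Since $\|T\|\Ge r(T)=1$ always, (ii) and (iii) are equivalent, and (i) trivially implies (ii), (iii) and (iv) because a unitary is a norm-preserving isometry.

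The first real step is to show that each of (iii) and (iv) forces every $N_j=0$. From $T^nE_j=\lambda_j^{\,n}\sum_{s\ge 0}\binom{n}{s}\lambda_j^{-s}N_j^{\,s}$ and the unimodularity of the $\lambda_j$, if some $N_j\ne0$ then, choosing $h\in\ob{E_j}$ with $N_j^{\,l}h\ne0$ for the largest $l$ with $N_j^{\,l}\ne0$, the term of order $\binom{n}{l}$ dominates and $\|T^nh\|\to\infty$. This contradicts $\|T^n\|\Le1$ under (iii) and the convergence in $\rbb_+$ under (iv). Hence $T=\sum_j\lambda_jE_j$ is diagonalisable with unimodular spectrum, and in particular invertible with $T^{-1}=\sum_j\bar\lambda_jE_j$.

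The heart of the proof, and the step I expect to be the main obstacle, is to transfer the contraction/convergence information from positive powers to $T^{-1}$. By a Kronecker–pigeonhole recurrence on the compact torus $\tbb^k$ I would produce a strictly increasing sequence $\{n_i\}\subset\nbb$ with $\lambda_j^{\,n_i}\to1$ simultaneously for every $j$; since the sum defining $T$ is finite, $\|T^{n_i}-I\|\Le\sum_j|\lambda_j^{\,n_i}-1|\,\|E_j\|\to0$, so $T^{n_i}\to I$ and therefore $T^{n_i-1}=T^{n_i}T^{-1}\to T^{-1}$ in operator norm. The two genuinely delicate points here are obtaining the recurrence for all coordinates at once rather than one at a time, and arranging $n_i\to\infty$, which is exactly what permits evaluating limits along the shifted indices $n_i-1$.

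Finally I would harvest the conclusion from $T^{n_i-1}\to T^{-1}$. Under (iii), $\|T^{n_i-1}\|\Le1$ yields $\|T^{-1}\|\Le1$, whence $\|h\|=\|T^{-1}Th\|\Le\|Th\|\Le\|h\|$ shows $T$ is an isometry; an invertible isometry is unitary. Under (iv), with $L(h)=\lim_n\|T^nh\|$, the subsequence $T^{n_i}h\to h$ gives $L(h)=\|h\|$, while $T^{n_i-1}h\to T^{-1}h$ forces $\|T^{-1}h\|=L(h)=\|h\|$; thus $T^{-1}$ is again an isometry and $T$ is unitary. This closes the cycle, since (ii)$\Leftrightarrow$(iii), (iii)$\Rightarrow$(i), (iv)$\Rightarrow$(i), and the reverse implications from (i) are immediate.
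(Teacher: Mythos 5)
Your proof is correct, but it closes the hard implications by a genuinely different mechanism than the paper's. Both arguments begin the same way: the paper derives power boundedness from (iv) via the uniform boundedness principle and then invokes Lemma~\ref{algunit-a} (whose proof rests on local spectral radius facts from \cite{C-J-S09}) to conclude that $T$ is diagonalizable, i.e.\ $T(h_1+\cdots+h_m)=z_1h_1+\cdots+z_mh_m$ on a not necessarily orthogonal direct sum $\hh=\hh_1\dotplus\cdots\dotplus\hh_m$ with unimodular $z_j$; your dominant-binomial-term estimate killing each nilpotent $N_j$ is a self-contained substitute for that reduction. The divergence comes afterwards. The paper proves that the spectral subspaces are pairwise \emph{orthogonal}: expanding $\|T^n(h_k+h_l)\|^2=\|h_k\|^2+2\re\big((z_k\bar z_l)^n\is{h_k}{h_l}\big)+\|h_l\|^2$, it applies Lemma~\ref{rewb} (if $|w|=1$, $w\neq\pm1$ and $\{\re(w^nb)\}_{n=0}^\infty$ converges, then $b=0$; proved via Jacobi's density theorem for non-roots of unity plus a separate finite-order argument, with the case $z_k\bar z_l=-1$ handled by hand), so that $T$ becomes an orthogonal direct sum of unimodular scalars, hence unitary. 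You never establish orthogonality: instead you use simultaneous recurrence in $\tbb^k$ to get $T^{n_i}\to I$ in operator norm, deduce $T^{n_i-1}\to T^{-1}$, and transfer either the contraction bound of (iii) or the limit $L(h)$ of (iv) to $T^{-1}$, ending with an invertible isometry. Your route buys three things: it avoids the paper's case split between roots of unity and irrational rotations (recurrence to the identity, unlike density of the orbit, holds for \emph{every} unimodular tuple), it proves (iii)$\Rightarrow$(i) directly rather than through (iv), and it yields the pleasant byproduct that $T^{-1}$ is a norm limit of powers of $T$; the paper's route yields the explicit orthogonal spectral decomposition of $T$ as part of the proof. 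The recurrence fact you flag as delicate is standard and follows from the pigeonhole argument you indicate: given $\varepsilon>0$ and $M\Ge 1$, applying the pigeonhole principle to the points $(\lambda_1^{mM},\ldots,\lambda_k^{mM})$, $m\Ge 0$, in a partition of $\tbb^k$ into cells of diameter less than $\varepsilon$ produces $n\Ge M$ with $\max_j|\lambda_j^{\,n}-1|<\varepsilon$; so the plan is complete as stated.
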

   \section{Preparatory facts} In this section we
give some basic facts about algebraic operators needed
in this paper. We begin with a purely linear algebra
result, the proof of which is left to the reader. If
$\mscr$ is a complex vector space, then the identity
transformation on $\mscr$ is denoted by $I_{\mscr}$
(or simply by $I$ if no ambiguity arises). We write
   \begin{align}  \label{sibs}
\mscr = \mscr_1 \dotplus \ldots \dotplus \mscr_m
   \end{align}
in the case when $\mscr$ is a direct sum of (finitely
many) vector subspaces $\mscr_1, \ldots, \mscr_m$.
   \begin{lem}
Suppose that \eqref{sibs} holds. Let $A\colon \mscr
\to \mscr$ be a linear transformation such that
$A(\mscr_j) \subseteq \mscr_j$ for all $j=1, \ldots,
m$ and let $z\in \cbb$. Then $A-z I_{\mscr}$ is a
bijection if and only if $A|_{\mscr_j}-z I_{\mscr_j}$
is a bijection for all $j=1, \ldots, m$. Moreover, if
$A-z I_{\mscr}$ is a bijection, then $(A-z
I_{\mscr})^{-1}(\mscr_j) = \mscr_j$ for all $j=1,
\ldots, m$ and
   \begin{align*}
(A|_{\mscr_j}-z I_{\mscr_j})^{-1} = (A-z
I_{\mscr})^{-1}|_{\mscr_j}, \quad j=1, \ldots, m.
   \end{align*}
   \end{lem}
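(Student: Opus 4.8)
The plan is to abbreviate $B = A - z I_{\mscr}$ and, for each $j$, $B_j = A|_{\mscr_j} - z I_{\mscr_j}$. The hypothesis $A(\mscr_j) \subseteq \mscr_j$ immediately gives $B(\mscr_j) \subseteq \mscr_j$, so $B_j$ is precisely the corestriction of $B|_{\mscr_j}$ to $\mscr_j$, and every $x \in \mscr$ decomposes uniquely as $x = x_1 + \ldots + x_m$ with $x_j \in \mscr_j$, yielding $B(x) = B_1(x_1) + \ldots + B_m(x_m)$ with the $j$-th summand in $\mscr_j$. The entire argument rests on playing this componentwise formula against the uniqueness of the decomposition in $\mscr = \mscr_1 \dotplus \ldots \dotplus \mscr_m$.

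First I would prove the easy implication: assuming each $B_j$ is bijective, deduce that $B$ is. For injectivity, if $B(x) = 0$ then comparing $\mscr_j$-components forces $B_j(x_j) = 0$, whence $x_j = 0$ by injectivity of $B_j$ and so $x = 0$. For surjectivity, given $y = y_1 + \ldots + y_m$ one picks $x_j \in \mscr_j$ with $B_j(x_j) = y_j$ and sets $x = x_1 + \ldots + x_m$, so that $B(x) = y$.

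For the converse, assume $B$ is a bijection. Injectivity of each $B_j$ is immediate, since the restriction of an injective map is injective. The main obstacle is surjectivity of $B_j$: the inclusion $B(\mscr_j) \subseteq \mscr_j$ is only one-sided, so one cannot simply invert it and must instead exploit uniqueness of the direct-sum decomposition. Given $y \in \mscr_j$, surjectivity of $B$ provides $x = x_1 + \ldots + x_m$ with $y = B(x) = \sum_k B_k(x_k)$, where $B_k(x_k) \in \mscr_k$; comparing the $\mscr_j$-components (the others must vanish) forces $y = B_j(x_j)$, so $y$ lies in the range of $B_j$. This is the one step where care is genuinely required.

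Finally, for the \emph{Moreover} part, having established $B(\mscr_j) = \mscr_j$, I would conclude $(A - z I_{\mscr})^{-1}(\mscr_j) = B^{-1}(\mscr_j) = B^{-1}(B(\mscr_j)) = \mscr_j$ using bijectivity of $B$. The identity $B_j^{-1} = B^{-1}|_{\mscr_j}$ then follows by evaluating on an arbitrary $y \in \mscr_j$: setting $x = B_j^{-1}(y) \in \mscr_j$ gives $B(x) = B_j(x) = y$, hence $B^{-1}(y) = x = B_j^{-1}(y)$. No difficulty arises here beyond the one-sided-inclusion issue already flagged, so I expect the write-up to be short.
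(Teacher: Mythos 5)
Your proof is correct and complete: both implications, the key surjectivity step for each $B_j$ via uniqueness of the decomposition in $\mscr = \mscr_1 \dotplus \ldots \dotplus \mscr_m$, and the \emph{Moreover} assertions are all handled properly. The paper itself leaves this lemma's proof to the reader as a routine linear-algebra exercise, and your componentwise argument is exactly the standard one the authors intend, so there is nothing to contrast.
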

   \begin{cor} \label{plalemc}
Suppose that $\hh$ is a complex Hilbert
space which is a direct sum of finitely
many nonzero closed vector subspaces
$\hh_1, \ldots, \hh_m$. Let
$T\in\ogr{\hh}$ be such that $T(\hh_j)
\subseteq \hh_j$ for all $j=1, \ldots,
m$. Then $\sigma(T|_{\hh_j}) \subseteq
\sigma(T)$ for all $j=1, \ldots, m$.
   \end{cor}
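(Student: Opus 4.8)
The plan is to deduce the spectral inclusion from the purely algebraic characterization of invertibility furnished by the foregoing lemma, combined with the bounded inverse theorem. First I would record two preliminary observations. Each $\hh_j$, being a closed subspace of the Hilbert space $\hh$, is itself a Hilbert space, and the invariance $T(\hh_j)\subseteq\hh_j$ makes the restriction $T|_{\hh_j}$ a well-defined member of $\ogr{\hh_j}$. Consequently, for a fixed $z\in\cbb$, the condition $z\notin\sigma(T|_{\hh_j})$ is equivalent to the bounded invertibility of $T|_{\hh_j}-z I_{\hh_j}$ on $\hh_j$, and since $\hh_j$ is complete, the open mapping (bounded inverse) theorem shows this is in turn equivalent to $T|_{\hh_j}-z I_{\hh_j}$ being merely a bijection of $\hh_j$ onto itself.

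The core of the argument is then to establish the equivalent inclusion $\cbb\setminus\sigma(T)\subseteq\cbb\setminus\sigma(T|_{\hh_j})$. So I would take an arbitrary $z\notin\sigma(T)$; then $T-z I$ is invertible in $\ogr{\hh}$ and, in particular, a bijection of $\hh$ onto itself. Applying the preceding lemma with $\mscr=\hh$, $A=T$, and $\mscr_j=\hh_j$ (the hypothesis $A(\mscr_j)\subseteq\mscr_j$ being precisely the assumed invariance), I conclude that $A|_{\mscr_j}-z I_{\mscr_j}=T|_{\hh_j}-z I_{\hh_j}$ is a bijection of $\hh_j$ onto itself. By the equivalence recorded above, this yields $z\notin\sigma(T|_{\hh_j})$, which is exactly the desired inclusion.

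The only point requiring care—and the step I would flag as the main obstacle—is the passage between the purely algebraic notion of bijectivity used in the lemma and the operator-theoretic notion of bounded invertibility that defines the spectrum. This is harmless here precisely because completeness of the closed subspace $\hh_j$ lets the bounded inverse theorem automatically upgrade a bounded bijection to a boundedly invertible operator, so the algebraic lemma only ever needs to be invoked at the level of bijections. I would not need the final ``moreover'' clause of the lemma (describing how $(A-z I_{\mscr})^{-1}$ restricts to $\mscr_j$), as the inclusion of spectra already follows from the bijectivity equivalence; that clause merely reassures us that the resolvent of $T|_{\hh_j}$ is the restriction of the resolvent of $T$.
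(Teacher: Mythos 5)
Your proof is correct and follows exactly the route the paper intends: the corollary is stated immediately after the linear-algebra lemma precisely so that one passes from invertibility of $T-zI$ on $\hh$ to bijectivity of $T|_{\hh_j}-zI_{\hh_j}$ on $\hh_j$, and then upgrades bijectivity to bounded invertibility via the bounded inverse theorem on the closed (hence complete) subspace $\hh_j$. Your observation that the ``moreover'' clause of the lemma is not needed for the spectral inclusion is also accurate.
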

For the sake of self-containedness, we
sketch the proof of the following lemma
that collects indispensable facts about
algebraic operators.
   \begin{lem} \label{algop}
Let $T \in \ogr{\hh}$. Then the following conditions
are equivalent{\em:}
   \begin{enumerate}
   \item[(i)] $T$ is algebraic,
   \item[(ii)] there exist an integer $m\Ge 1$, integers
$i_1, \ldots, i_m \Ge 1$, distinct complex numbers
$z_1, \ldots, z_m$ and closed nonzero vector subspaces
$\hh_1, \ldots, \hh_m$ of $\hh$ such that
   \begin{enumerate}
   \item[(ii-a)] $\hh= \hh_1 \dotplus \ldots \dotplus
\hh_m$,
   \item[(ii-b)] $T(\hh_j) \subseteq \hh_j$ for all $j=1, \ldots,
m$,
   \item[(ii-c)] $(T_j -z_j I_j)^{i_j} = 0$ for all  $j=1, \ldots,
m$, where $T_j:=T|_{\hh_j}$ and~ $I_j:=I_{\hh_j}$,
   \item[(ii-d)] $\sigma(T)=\{z_1,\ldots,z_m\}$ and $\sigma(T_j) =
\{z_j\}$ for all $j=1, \ldots, m$,
   \item[(ii-e)] there exists a constant $c\in (0,\infty)$ such that
   \begin{align} \label{lobos}
\|h_j\| \Le c \|h_1 + \ldots + h_m\|, \quad j =1,
\ldots, m, \, h_1 \in \hh_1, \ldots, h_m \in \hh_m.
   \end{align}
   \end{enumerate}
   \end{enumerate}
   \end{lem}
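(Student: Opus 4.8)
The plan is to prove the two implications separately, the substance lying in $(\mathrm{i})\Rightarrow(\mathrm{ii})$. For the easy direction $(\mathrm{ii})\Rightarrow(\mathrm{i})$ I would set $p(z)=\prod_{j=1}^m(z-z_j)^{i_j}$: writing $h\in\hh$ as $h=h_1+\dots+h_m$ with $h_j\in\hh_j$ and using (ii-b) and (ii-c), the factor $(T-z_jI)^{i_j}$ annihilates $h_j$ while the remaining commuting factors leave $\hh_j$ invariant, so $p(T)h_j=0$; hence $p(T)=0$, and since $m\Ge1$ forces $p\ne0$, $T$ is algebraic.

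For $(\mathrm{i})\Rightarrow(\mathrm{ii})$ let $p$ be the minimal polynomial of $T$ and factor it over $\cbb$ as $p(z)=\prod_{j=1}^m(z-z_j)^{i_j}$ with the $z_j$ distinct and $i_j\Ge1$, and put $p_j(z)=p(z)/(z-z_j)^{i_j}$. The main device is the partial fraction decomposition $1/p(z)=\sum_{j=1}^m a_j(z)/(z-z_j)^{i_j}$ with $\deg a_j<i_j$, which after clearing denominators becomes the polynomial identity $\sum_{j=1}^m a_j(z)p_j(z)=1$. Setting $E_j:=a_j(T)p_j(T)$, I would verify that $\sum_j E_j=I$ and that $E_jE_k=0$ for $j\ne k$ (because $p$ divides $p_jp_k$ and $p(T)=0$); together these give $E_j=E_j\sum_kE_k=E_j^2$, so the $E_j$ are idempotents that commute with $T$ and are bounded, both being polynomials in $T$.

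Now define $\hh_j:=\ob{E_j}=\jd{I-E_j}$, which is closed and $T$-invariant since $TE_j=E_jT$. The crux is the identity $\hh_j=\jd{(T-z_jI)^{i_j}}$: the inclusion $\subseteq$ follows from $(T-z_jI)^{i_j}E_j=a_j(T)p(T)=0$, and $\supseteq$ holds because for $k\ne j$ the polynomial $p_k$ is divisible by $(z-z_j)^{i_j}$, so $E_k$ kills $\jd{(T-z_jI)^{i_j}}$ while $\sum_kE_k=I$ forces such vectors into $\ob{E_j}$. The relations $\sum_jE_j=I$ and $E_jE_k=\delta_{jk}E_j$ then yield the algebraic direct sum (ii-a) with $E_j$ the bounded projection of $\hh$ onto $\hh_j$, giving (ii-b), and (ii-c) is immediate from the identity $\hh_j=\jd{(T-z_jI)^{i_j}}$. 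For (ii-e) observe that if $h=h_1+\dots+h_m$ with $h_j\in\hh_j$ then $E_jh=h_j$, so $\|h_j\|\Le\|E_j\|\,\|h\|$ and one may take $c=\max_j\|E_j\|<\infty$; this is exactly where boundedness of the $E_j$, rather than mere algebra, is used.

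It remains to check that each $\hh_j\ne\nul$ and to determine the spectra in (ii-d). Nonvanishing is where minimality of $p$ is used: were $E_j=0$, then $a_jp_j$ would be a polynomial annihilating $T$, nonzero (since $a_j\ne0$, as $z_j$ is a pole of $1/p$) and of degree $\deg a_j+\deg p_j<i_j+(\deg p-i_j)=\deg p$, contradicting minimality. For (ii-d) the usual root-versus-spectrum argument gives $\sigma(T)=\{z_1,\dots,z_m\}$: if $p(z_0)\ne0$ then $p(z)-p(z_0)=(z-z_0)g(z)$ exhibits an inverse of $T-z_0I$, whereas if $z_0=z_j$ then $p/(z-z_j)$ has degree below $\deg p$ and so cannot annihilate $T$, showing $T-z_jI$ is not invertible. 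Applying the same reasoning to $T_j$, whose annihilating polynomial $(z-z_j)^{i_j}$ has sole root $z_j$, gives $\sigma(T_j)\subseteq\{z_j\}$, and since $\hh_j\ne\nul$ makes $\sigma(T_j)$ nonempty we obtain $\sigma(T_j)=\{z_j\}$ (alternatively $\sigma(T_j)\subseteq\sigma(T)$ by Corollary~\ref{plalemc}). The main obstacle is passing from the algebraic identities among the $E_j$ to the topological conclusions — closedness of the $\hh_j$ and the estimate (ii-e) — which is resolved cleanly by the observation that these spectral idempotents are polynomials in $T$, so no holomorphic functional calculus is required.
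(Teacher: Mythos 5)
Your proof is correct, but it takes a genuinely different route from the paper's. The paper gets the decomposition $\hh=\hh_1\dotplus\ldots\dotplus\hh_m$ with $\hh_j=\jd{(T-z_jI)^{i_j}}$ by citing an external result (Lemma~6.1 of \cite{C-J-S09}), obtains (ii-d) via the spectral mapping theorem together with Corollary~\ref{plalemc}, and --- crucially --- proves the estimate (ii-e) by a topological argument: it observes that each complementary subspace $\hh_1\dotplus\ldots\dotplus\widehat{\hh_j}\dotplus\ldots\dotplus\hh_m$ is closed (being the kernel of $\prod_{k\neq j}(T-z_kI)^{i_k}$) and then invokes \cite[Theorem~III.13.2]{con90}, i.e.\ essentially the closed graph theorem, to conclude that the associated projections are bounded. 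You instead build the spectral idempotents $E_j=a_j(T)p_j(T)$ explicitly from the partial fraction decomposition of $1/p$, which makes the whole argument self-contained: the algebraic identities $\sum_jE_j=I$ and $E_jE_k=\delta_{jk}E_j$ give the direct sum, the identity $\ob{E_j}=\jd{(T-z_jI)^{i_j}}$ gives (ii-b) and (ii-c), minimality of $p$ gives $\hh_j\neq\nul$, and boundedness of the $E_j$ is automatic because they are polynomials in $T$, so (ii-e) comes for free with $c=\max_j\|E_j\|$ --- no closed graph theorem and no external citation needed. Your nonvanishing argument ($E_j=0$ would produce the nonzero annihilating polynomial $a_jp_j$ of degree $<\deg p$, using $a_j(z_j)=1/p_j(z_j)\neq 0$) and your direct identification $\sigma(T)=\{z_1,\ldots,z_m\}$ from the minimal polynomial are both sound. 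In short: the paper's proof is shorter modulo its references; yours trades that brevity for a constructive, fully self-contained argument in which the topological conclusions (closedness of the $\hh_j$, the bound \eqref{lobos}) are consequences of pure algebra plus the trivial boundedness of polynomials in $T$.
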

   \begin{proof}
(i)$\Rightarrow$(ii) Let $T$ be an algebraic operator
and $p$ be its minimal polynomial. Clearly, $\deg p
\Ge 1$. It follows from the fundamental theorem of
algebra~that
   \begin{align*}
p(x)=(x-z_1)^{i_1}\cdots (x-z_m)^{i_m}
   \end{align*}
with unique integers $i_1, \ldots, i_m
\Ge 1$ and distinct complex numbers
$z_1,\ldots , z_m$. In view of
\cite[Lemma~ 6.1]{C-J-S09}, the condition
\mbox{(ii-a)} holds with $\hh_j:=\jd{(T
-z_j I)^{i_j}} \neq \nul$ for $j=1,
\dots, m$. This implies that $\hh_1,
\ldots, \hh_m$ are closed vector
subspaces of $\hh$ which are invariant
for $T$. As a consequence, \mbox{(ii-b)}
and \mbox{(ii-c)} hold. By the spectral
mapping theorem, $\sigma(T) \subseteq
\{z_1, \ldots, z_m\}$. Since $(T_j -z_j
I_j)^{i_j} = 0$ and $\hh_j \neq \nul$, we
infer from the spectral mapping theorem
and Corollary~ \ref{plalemc} that
   \begin{align*}
\{z_j\}=\sigma(T_j) \subseteq \sigma(T), \quad j=1,
\ldots, m,
   \end{align*}
which implies \mbox{(ii-d)}.

Now, we proceed to the proof of
\mbox{(ii-e)}. Define for $j=1,\ldots,m$
the linear projection $P_j\colon \hh \to
\hh$ by
   \begin{align*}
P_j(h_1 + \ldots + h_m)=h_j, \quad h_1 \in \hh_1,
\ldots, h_m \in \hh_m.
   \end{align*}
By \mbox{(ii-a)}, this definition is correct. Using
\cite[Lemma~ 6.1(iii)]{C-J-S09} we see that
   \begin{align*}
\kk_2:=\hh_2 \dotplus \ldots \dotplus \hh_m =
\mathscr{N}\bigg(\prod_{j=2}^m (T -z_j I)^{i_j}\bigg),
   \end{align*}
and so $\kk_2$ is a closed vector
subspace of $\hh$. Since, by
\mbox{(ii-a)}, $\hh=\hh_1\dotplus \kk_2$,
we infer from
\cite[Theorem~III.13.2]{con90} that
$P_1\in \ogr{\hh}$. A similar argument
shows that $P_j\in \ogr{\hh}$ for all
$j\in \{1,\ldots,m\}$. This implies
\eqref{lobos}.

(ii)$\Rightarrow$(i) It is enough to note that $p(T)=0$
with $p(x)=(x-z_1)^{i_1}\cdots (x-z_m)^{i_m}$. This
completes the proof.
   \end{proof}
   \section{\label{Sec.3}Proof of the main result}
   We begin by stating an auxiliary lemma. For the
reader's convenience, we give its proof, which is
essentially the same as that of Lemma~2.1 in the
unpublished paper \cite{Ja-Ju-St22}.
   \begin{lem} \label{rewb}
Let $b,w\in \cbb$ be such that $|w|=1$ and $w\neq \pm
1$. Assume that the sequence $\{\re(w^n
b)\}_{n=0}^{\infty}$ is convergent. Then $b=0$.
   \end{lem}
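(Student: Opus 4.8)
The plan is to exploit the recursion $w^{n+1}b = w\cdot(w^n b)$ together with the fact that $|w^n b| = |b|$ is constant (since $|w|=1$), so that convergence of the real parts alone forces the whole sequence $\{w^n b\}$ to converge to a fixed point of multiplication by $w$. Write $w = \cos\theta + \I \sin\theta$; since $w \neq \pm 1$ we have $\sin\theta \neq 0$. Put $a_n = w^n b$ and let $x_n$ and $y_n$ denote the real and imaginary parts of $a_n$, so that $x_n = \re(a_n)$ is, by hypothesis, a convergent sequence, say $x_n \to L$.

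First I would upgrade convergence of the real parts to convergence of the imaginary parts. Expanding $a_{n+1} = w a_n$ gives $x_{n+1} = x_n \cos\theta - y_n \sin\theta$. Since $\{x_n\}$ converges, so does $\{x_{n+1}\}$ to the same limit $L$, and therefore $y_n \sin\theta = x_n\cos\theta - x_{n+1}$ converges. Dividing by $\sin\theta \neq 0$ shows that $\{y_n\}$ converges, whence $\{a_n\}$ converges in $\cbb$ to some $a$.

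Next I would close the argument by a fixed-point observation. Passing to the limit in $a_{n+1} = w a_n$ yields $a = w a$, i.e.\ $(1-w)a = 0$; since $w \neq 1$ this forces $a = 0$. On the other hand $|a_n| = |w|^n|b| = |b|$ for every $n$, so $|a| = |b|$ by continuity of the modulus. Combining the two identities gives $|b| = 0$, that is $b=0$.

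The computation is entirely elementary, and there is no serious obstacle; the only subtlety is keeping track of where the two exclusions are used. The condition $w \neq -1$ (equivalently $\sin\theta \neq 0$) is precisely what allows recovering $\{y_n\}$ from $\{x_n\}$ through the recursion, while $w \neq 1$ is what makes $0$ the unique fixed point of multiplication by $w$. Dropping either hypothesis makes the conclusion fail, since $w=1$ makes $\re(w^n b)$ constant for every $b$, and $w=-1$ only forces $\re(b)=0$.
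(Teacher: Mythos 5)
Your proof is correct, and it takes a genuinely different route from the paper's. The paper argues by contradiction and splits into two cases according to whether $w$ is a root of unity: if $w^m\neq 1$ for all $m$, it invokes Jacobi's theorem on the density of $\{w^n\}_{n=0}^\infty$ in $\tbb$ to produce subsequences along which $\re(w^nb)$ tends to $|b|$ and to $-|b|$; if $w^m=1$ with $m\Ge 3$, it analyzes the finitely many cluster points $\re(b),\re(wb),\dots,\re(w^{m-1}b)$ and derives a contradiction from $\bar b=wb$ and $\bar b=w^2b$. Your argument avoids both the case dichotomy and the appeal to equidistribution: the recursion $\re(w^{n+1}b)=\re(w^nb)\cos\theta-\im(w^nb)\sin\theta$, with $\sin\theta\neq 0$ guaranteed by $w\neq\pm 1$, upgrades convergence of the real parts to convergence of the full sequence $\{w^nb\}$, and then the fixed-point identity $a=wa$ with $w\neq 1$ together with $|w^nb|=|b|$ forces $b=0$. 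This is more elementary and self-contained (no citation needed), and it even yields slightly more, namely that $\{w^nb\}$ itself converges to $0$; what the paper's approach displays instead is the structure of the orbit closure (dense orbit versus finite orbit), which is conceptually illuminating but logically heavier. Your closing remarks correctly locate where each hypothesis is used and why both are indispensable.
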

   \begin{proof}
Suppose to the contrary that $b\neq 0$.

Consider first the case when $w^m \neq 1$ for all
$m\in \nbb$. Then, by Jacobi's theorem (see
\cite[Theorem~ I.3.13]{Dev}), the sequence
$\{w^n\}_{n=0}^{\infty}$ is dense in $\tbb$. Hence,
there exist two subsequences
$\{w^{k_n}\}_{n=0}^{\infty}$ and
$\{w^{l_n}\}_{n=0}^{\infty}$ of the sequence
$\{w^n\}_{n=0}^{\infty}$ such that
   \begin{align*}
\text{$\lim_{n\to\infty} w^{k_n} = \frac{\bar b}{|b|}$
\quad and \quad $\lim_{n\to\infty} w^{l_n} = -
\frac{\bar b}{|b|}$.}
   \end{align*}
As a consequence, we have
   \begin{align*}
|b|=\lim_{n\to\infty} \re(w^{k_n} b) =
\lim_{n\to\infty} \re(w^{l_n} b) = -|b|,
   \end{align*}
which contradicts $b\neq 0$.

Suppose now that $w^m=1$ for some $m\in \nbb$. By our
assumption on $w$, $m$ must be greater than or equal
to $3$. It is easily seen that $\re(b), \re(wb),
\ldots, \re(w^{m-1}b)$ are the cluster points of the
sequence $\{\re(w^nb)\}_{n=0}^{\infty}$. Hence, we
have
   \begin{align} \label{rebe}
\re(b)=\re(w^jb), \quad j\in \nbb.
   \end{align}
Observe that
   \begin{align} \label{rebe2}
|b|=|w^jb|, \quad j\in \nbb.
   \end{align}
By \eqref{rebe} and \eqref{rebe2} with
$j=1$, either $b=wb$, which contradicts
$w\neq 1$, or $\bar b=wb$. Next, by
\eqref{rebe} and \eqref{rebe2} with
$j=2$, either $b=w^2b$, a contradiction,
or $\bar b = w^2 b$ which, together with
$\bar b=wb$, leads to a contradiction.
This completes the proof.
   \end{proof}
Before proving the main result of this paper, we
characterize power bounded algebraic operators with
spectrum in the unit circle. Recall that $T\in
\ogr{\hh}$ is said to be {\em power bounded} if
$\sup_{n\in\zbb_+} \|T^n\| < \infty$.
   \begin{lem} \label{algunit-a}
Let $T\in\ogr{\hh}$. Then the following
conditions are equivalent{\em :}
   \begin{enumerate}
   \item[(i)] $T$ is a power bounded algebraic
operator such that $\sigma(T) \subseteq
\tbb$,
   \item[(ii)] there exist an integer $m\Ge 1$,  closed
nonzero vector subspaces $\hh_1, \ldots,
\hh_m$ of $\hh$ and distinct complex
numbers $z_1, \ldots, z_m$ such that
   \begin{enumerate}
   \item[(ii-a)] $\hh= \hh_1 \dotplus
\ldots \dotplus \hh_m$,
   \item[(ii-b)] $T(h_1 + \ldots + h_m) = z_1 h_1 + \ldots + z_m
h_m$ for all $h_1 \in \hh_1, \ldots, h_m
\in \hh_m$,
   \item[(ii-c)] $\{z_1, \ldots, z_m\}
\subseteq \tbb$.
   \end{enumerate}
   \end{enumerate}
   \end{lem}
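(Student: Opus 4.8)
The plan is to read the statement as saying that, within the class of algebraic operators whose spectrum lies in $\tbb$, power boundedness is exactly equivalent to being \emph{semisimple} with unimodular eigenvalues, i.e.\ diagonalizable along a topological direct sum with no nontrivial nilpotent (Jordan) part. Accordingly I would prove the two implications separately, and essentially all the work goes into showing that power boundedness forbids Jordan blocks of size $\Ge 2$; the reverse implication is routine.

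For the direction (ii)$\Rightarrow$(i): since $\hh=\hh_1\dotplus\ldots\dotplus\hh_m$ is a direct sum of closed subspaces, the open mapping (equivalently, closed graph) theorem yields bounded projections $P_j\in\ogr{\hh}$ onto $\hh_j$ along the complementary summand, exactly as in the proof of Lemma~\ref{algop}; they satisfy $P_jP_k=\delta_{jk}P_j$ and $\sum_{j=1}^m P_j=I$. By (ii-b) one has $Th=\sum_{j=1}^m z_j P_j h$, so $T=\sum_{j=1}^m z_j P_j$. First I would note that $T$ is algebraic because $(T-z_j I)P_k=(z_k-z_j)P_k$ forces $\prod_{j=1}^m (T-z_j I)=0$. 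Next, for $z\notin\{z_1,\ldots,z_m\}$ the operator $R:=\sum_{j=1}^m (z_j-z)^{-1}P_j\in\ogr{\hh}$ is a two-sided inverse of $T-zI$, whence $\sigma(T)\subseteq\{z_1,\ldots,z_m\}\subseteq\tbb$ by (ii-c). Finally, since $T^n=\sum_{j=1}^m z_j^n P_j$ and $|z_j|=1$, one gets $\|T^n h\|\Le\sum_{j=1}^m\|P_j h\|\Le\big(\sum_{j=1}^m\|P_j\|\big)\|h\|$, so $\sup_n\|T^n\|<\infty$ and $T$ is power bounded.

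For the direction (i)$\Rightarrow$(ii): I would apply Lemma~\ref{algop} to obtain $m$, integers $i_1,\ldots,i_m\Ge 1$, distinct $z_1,\ldots,z_m$, and closed nonzero invariant subspaces $\hh_1,\ldots,\hh_m$ satisfying (ii-a)--(ii-e) of that lemma. Writing $T_j:=T|_{\hh_j}$ and $I_j:=I_{\hh_j}$, this gives (ii-a) here, and condition (ii-d) of Lemma~\ref{algop} together with $\sigma(T)\subseteq\tbb$ gives $\{z_1,\ldots,z_m\}=\sigma(T)\subseteq\tbb$, which is (ii-c). It then remains to establish (ii-b), and since $\hh=\hh_1\dotplus\ldots\dotplus\hh_m$ with $T_j=T|_{\hh_j}$, this reduces to showing $T_j=z_j I_j$ for every $j$, i.e.\ that each index $i_j$ equals $1$.

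This last reduction is the main obstacle, and I would resolve it by contradiction. Suppose $i_j\Ge 2$ for some $j$ and put $N_j:=T_j-z_j I_j$; then $N_j^{i_j}=0$ by (ii-c) of Lemma~\ref{algop}, while $N_j^{i_j-1}\neq 0$ by minimality of the minimal polynomial (otherwise $(x-z_j)^{i_j-1}\prod_{k\neq j}(x-z_k)^{i_k}$ would annihilate $T$). Since $|z_j|=1$, the binomial expansion gives, for $n\Ge i_j-1$,
\[
z_j^{-n}T_j^n=\sum_{k=0}^{i_j-1}\binom{n}{k}z_j^{-k}N_j^k,
\]
and dividing by $\binom{n}{i_j-1}$ and letting $n\to\infty$ yields $\binom{n}{i_j-1}^{-1}z_j^{-n}T_j^n\to z_j^{-(i_j-1)}N_j^{i_j-1}$ in operator norm, because this is a finite sum in which every lower-order coefficient satisfies $\binom{n}{k}/\binom{n}{i_j-1}\to 0$. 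As $N_j^{i_j-1}\neq 0$ and $|z_j^{-n}|=1$, it follows that $\|T_j^n\|\Ge\frac{1}{2}\binom{n}{i_j-1}\|N_j^{i_j-1}\|$ for all large $n$, so $\|T_j^n\|\to\infty$ since $i_j-1\Ge 1$. But $\hh_j$ is invariant for $T$, hence $\|T^n\|\Ge\|T_j^n\|$, contradicting the power boundedness of $T$. Therefore $i_j=1$ for every $j$, so $T_j=z_j I_j$ and (ii-b) holds. The crux is precisely this polynomial (order $n^{i_j-1}$) blow-up of the iterates produced by a nontrivial nilpotent part, which is incompatible with power boundedness; everything else is bookkeeping with the decomposition furnished by Lemma~\ref{algop}.
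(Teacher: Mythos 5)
Your proof is correct, and its overall skeleton matches the paper's: both directions run through Lemma~\ref{algop}, and the crux of (i)$\Rightarrow$(ii) is showing that power boundedness forces every exponent $i_j$ to equal $1$. Where you genuinely differ is in how that crux is handled. The paper argues vector by vector, invoking \cite[Sublemma 6.3]{C-J-S09}: for each nonzero $h_k\in\hh_k$ the sequence $n^{-N(h_k)}\|T_k^n h_k\|$ converges to a \emph{positive} limit, where $N(h_k)$ is the nilpotency order of $h_k$; boundedness of $\{\|T_k^n h_k\|\}_{n=0}^{\infty}$ then forces $N(h_k)=0$, i.e.\ $T_kh_k=z_kh_k$. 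You instead work at the operator-norm level: from the binomial expansion of $(z_jI_j+N_j)^n$ you extract $\binom{n}{i_j-1}^{-1}z_j^{-n}T_j^n\to z_j^{-(i_j-1)}N_j^{i_j-1}$ in norm, and minimality of the minimal polynomial guarantees $N_j^{i_j-1}\neq 0$ (your justification of this point is correct), so $\|T_j^n\|\to\infty$ whenever $i_j\Ge 2$, contradicting power boundedness via $\|T^n\|\Ge\|T_j^n\|$. Your route is self-contained --- it needs no citation beyond the paper's own Lemma~\ref{algop} --- and arguably more elementary, at the modest cost of the extra observation about $N_j^{i_j-1}$. In (ii)$\Rightarrow$(i) the two arguments are essentially the same (algebraicity from the product polynomial; spectrum located by your explicit resolvent rather than the spectral mapping theorem; power boundedness from bounded projections); the only point to flag is that boundedness of your projections $P_j$ requires the complementary sums $\hh_1\dotplus\ldots\dotplus\hh_{j-1}\dotplus\hh_{j+1}\dotplus\ldots\dotplus\hh_m$ to be closed, which does hold here because, exactly as in the proof of Lemma~\ref{algop}, they are kernels of polynomials in $T$; alternatively one can bypass the projections altogether with the uniform boundedness principle, as the paper itself remarks, since for fixed $h=h_1+\ldots+h_m$ one has $\|T^nh\|\Le\|h_1\|+\ldots+\|h_m\|$ for all $n$.
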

   \begin{proof}
(i)$\Rightarrow$(ii) Assume that $T$ is a
power bound algebraic operator such that
$\sigma(T) \subseteq \tbb$. By
Lemma~\ref{algop}, there exist an integer
$m\Ge 1$, integers $i_1, \ldots, i_m \Ge
1$, distinct complex numbers $z_1,
\ldots, z_m$ and closed nonzero vector
subspaces $\hh_1, \ldots, \hh_m$ of $\hh$
that satisfy the conditions
\mbox{(ii-a)}-\mbox{(ii-d)} of this
lemma. Since $\sigma(T) \subseteq \tbb$,
we have
   \begin{align} \label{zmod}
|z_l| = 1, \quad l=1, \ldots, m.
   \end{align}
Fix $k\in\{1, \ldots, m\}$. Take
$h_k\in\hh_k \setminus \nul$. Using
\eqref{zmod} ,
Lemma~\ref{algop}\mbox{(ii-c)} and
\cite[Sublemma 6.3]{C-J-S09}, we deduce
that
   \begin{align} \label{zbieg}
\text{$\alpha_{n}(h_k):=\frac{1}{n^{N(h_k)}}
\|T_k^n h_k\|$ converges to a positive
real number as $n\to \infty$,}
   \end{align}
where $N(h_k)$ is the unique nonnegative
integer such that
   \begin{align} \label{zbieg2}
\text{$(T_k-z_kI_k)^{N(h_k)}h_k \neq 0$
and $(T_k-z_kI_k)^{N(h_k)+1} h_k = 0$.}
   \end{align}
In particular, we have
   \begin{align}  \label{nier}
\|T_k^n h_k\| = n^{N(h_k)}
\alpha_{n}(h_k), \quad n \Ge 1.
   \end{align}
Since $T_k$ is power bounded, the
sequence $\{\|T_k^n
h_k\|\}_{n=0}^{\infty}$ is bounded.
Hence, one can infer from \eqref{zbieg}
and \eqref{nier} that $N(h_k)=0$. This,
together with \eqref{zbieg2}, implies
that $T_k h_k= z_k h_k$. As a
consequence, the system $T$, $z_1$,
\ldots, $z_m$, $\hh_1$, \ldots, $\hh_m$
satisfies the conditions (ii-a), (ii-b)
and (ii-c).

(ii)$\Rightarrow$(i) By (ii-a) and
(ii-b), $p(T)=0$ with $p(x)=(x-z_1)\cdots
(x-z_m)$, which means that $T$ is an
algebraic operator. According to the
spectral mapping theorem, $\sigma(T)
\subseteq \{z_1, \ldots, z_m\}$. In turn,
by (ii-b) and the assumption that each
$\hh_i$ is nonzero, we deduce that $z_1,
\ldots, z_m$ are eigenvalues of $T$.
Therefore, by (ii-c), we have
   \begin{align*}
\sigma(T) = \{z_1, \ldots, z_m\}
\subseteq \tbb.
   \end{align*}
Now, using Lemma~\ref{algop}(ii-e) (or
the uniform boundedness principle), we
deduce the power boundedness of $T$ from
(ii-a), (ii-b) and (ii-c). This completes
the proof.
   \end{proof}
   \begin{rem} \label{lidw}
Regarding Theorem~\ref{algunit}, we note
that in view of Lemma~\ref{algunit-a}
there exist algebraic operators $T$ with
spectrum in $\tbb$ that are not unitary
but have the property that each sequence
$\{\|T^n h\|\}_{n=0}^{\infty}$ is bounded
(or equivalently, $T$ is power bounded).
   \hfill $\diamondsuit$
   \end{rem}
   Now, we are ready to prove the main
result of this paper.
   \begin{proof}[Proof of Theorem~\ref{algunit}]
The implications
(i)$\Rightarrow$(ii)$\Rightarrow$(iii)$\Rightarrow$(iv)
are obvious.

(iv)$\Rightarrow$(i) Assume that $T$ is
algebraic, $\sigma(T) \subseteq \tbb$ and
the sequence $\{\|T^n
h\|\}_{n=0}^{\infty}$ is convergent in
$\rbb_+$ for every $h\in \hh$. By the
uniform boundedness principle, $T$ is
power bounded. In view of the implication
(i)$\Rightarrow$(ii) of
Lemma~\ref{algunit-a}, there exist an
integer $m\Ge 1$, distinct complex
numbers $z_1, \ldots, z_m$ and closed
nonzero vector subspaces $\hh_1, \ldots,
\hh_m$ of $\hh$ that satisfy the
conditions (ii-a), (ii-b) and (ii-c) of
Lemma~\ref{algunit-a}. Fix distinct $k,l
\in \{1, \ldots,m\}$. Take $h_k\in\hh_k$
and $h_l\in\hh_l$. Then, by the
conditions (ii-a), (ii-b) and (ii-c) of
Lemma~\ref{algunit-a}, we have
   \allowdisplaybreaks
   \begin{align*}
\|T^n (h_k+h_l)\|^{2}&=\|z_k^n h_k +
z_l^n h_l\|^{2}
   \\
&= \|h_k\|^2 + 2 \re \big((z_k\bar z_l)^n
\is{h_k}{h_l}\big) + \|h_l\|^2, \quad
n\Ge 0.
   \end{align*}
Combined with (iv), this implies that the
sequence $\big\{\re \big((z_k\bar z_l)^n
\is{h_k}{h_l}\big)\big\}_{n=0}^{\infty}$
is convergent. If $z_k\bar z_l=-1$, then
we see that $\re\is{h_k}{h_l}=0$.
Substituting $\I h_k$ in place of $h_k$,
we deduce that $\is{h_k}{h_l}=0$. The
only possibility left is that $z_k\bar
z_l\neq \pm 1$. Since, by the condition
(ii-c) of Lemma~\ref{algunit-a},
$|z_k\bar z_l|=1$, we infer from Lemma~
\ref{rewb} that $\is{h_k}{h_l}=0$. This
shows that $\hh= \hh_1 \oplus \cdots
\oplus \hh_m$. Hence, by the conditions
(ii-b) and (ii-c) of
Lemma~\ref{algunit-a}, we conclude that
$T$ is a unitary operator. This completes
the proof.
   \end{proof}
   \section{Related results}
For the reader's convenience, we record
here some useful facts related to the
main topic of this paper concerning
certain classes of operators. We begin by
discussing the question of the existence
of the limits $\lim_{n\to\infty} \|T^n
h\|$ in the case of normaloid operators
and the issue of strong
stability\footnote{We refer the reader to
\cite{Kub97,Kub11} for a discussion of
the different types of stability of
operators.} in the context of subnormal
operators. Both are intimately related to
Theorem~ \ref{algunit}.
   \begin{pro} \label{normaloid}
Let $T\in\ogr{\hh}$ be a normaloid operator. Then the
following conditions are equivalent{\em :}
   \begin{enumerate}
   \item[(i)] the sequence $\{\|T^n h\|\}_{n=0}^{\infty}$
is convergent in $\rbb_+$ for every $h\in \hh$,
   \item[(ii)] $T$ is power bounded,
   \item[(iii)] $T$ is a contraction.
   \end{enumerate}
   \end{pro}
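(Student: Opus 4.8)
The plan is to prove the three conditions equivalent by running the cycle $(\mathrm{iii})\Rightarrow(\mathrm{i})\Rightarrow(\mathrm{ii})\Rightarrow(\mathrm{iii})$, and to observe at the outset that only the very last implication will use the normaloid hypothesis; the first two hold for an arbitrary $T\in\ogr{\hh}$.

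First I would settle $(\mathrm{iii})\Rightarrow(\mathrm{i})$ by a monotonicity argument. Assuming $T$ is a contraction, so that $\|T\|\Le 1$, for every $h\in\hh$ and every $n\in\zbb_+$ one has $\|T^{n+1}h\| = \|T(T^n h)\| \Le \|T\|\,\|T^n h\| \Le \|T^n h\|$. Thus $\{\|T^n h\|\}_{n=0}^{\infty}$ is a nonincreasing sequence of nonnegative reals, hence bounded below by $0$, and therefore convergent in $\rbb_+$. Next, for $(\mathrm{i})\Rightarrow(\mathrm{ii})$ I would argue exactly as in the proof of Theorem~\ref{algunit}: since each sequence $\{\|T^n h\|\}_{n=0}^{\infty}$ converges, it is in particular bounded, so $\sup_{n\in\zbb_+}\|T^n h\|<\infty$ for every $h\in\hh$; the uniform boundedness principle then yields $\sup_{n\in\zbb_+}\|T^n\|<\infty$, which is precisely power boundedness of $T$.

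It remains to prove $(\mathrm{ii})\Rightarrow(\mathrm{iii})$, and this is the only step where normaloidness enters. By the characterization of normaloid operators recalled in the Introduction, $\|T^n\|=\|T\|^n$ for all $n\in\nbb$. If $T$ is power bounded, then $\sup_{n\in\nbb}\|T\|^n = \sup_{n\in\nbb}\|T^n\| < \infty$, which forces $\|T\|\Le 1$; that is, $T$ is a contraction. Combining the three implications closes the cycle and completes the proof. I do not anticipate a genuine obstacle here: each step is short, and the content of the proposition is essentially the observation that for normaloid operators power boundedness, being a contraction, and the pointwise convergence of $\{\|T^n h\|\}_{n=0}^{\infty}$ all coincide, with the equivalence of power boundedness and contractivity being the only place where the spectral-radius equality $r(T)=\|T\|$ is used.
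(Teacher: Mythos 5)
Your proof is correct and follows essentially the same route as the paper: (i)$\Rightarrow$(ii) via the uniform boundedness principle, (ii)$\Rightarrow$(iii) via normaloidness (you use $\|T^n\|=\|T\|^n$, while the paper cites Gelfand's formula for $r(T)$ — the paper itself notes these are equivalent formulations), and (iii)$\Rightarrow$(i) by the obvious monotonicity argument. Your write-up just spells out the details that the paper's two-line proof leaves to the reader.
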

   \begin{proof}
By the uniform boundedness principle and Gelfand's formula
for $r(T)$, (i) implies (ii) and (ii) implies (iii). That
(iii) implies (i) is obvious.
   \end{proof}
Before formulating the next result, we give the
necessary definitions and facts related to the concept
of subnormality. Recall that an operator $T\in
\ogr{\hh}$ is said to be {\em subnormal} if there
exist a complex Hilbert space $\kk$ and a normal
operator $N\in \ogr{\kk}$ such that $\hh \subseteq
\kk$ (isometric embedding) and $Th=Nh$ for all $h\in
\hh$. Such an $N$ is called a {\em normal extension}
of $T$; if $\kk$ has no proper closed vector subspace
containing $\hh$ and reducing $N$, then $N$ is called
{\em minimal}. By a {\em semispectral measure} of a
subnormal operator $T\in\ogr{\hh}$ we mean the Borel
$\ogr{\hh}$-valued measure $F$ on $\cbb$ defined by
   \begin{align}  \label{kompr}
F(\varDelta) = PE(\varDelta)|_{\hh}, \quad
\text{$\varDelta$ - Borel subset of $\cbb$},
   \end{align}
where $E$ is the spectral measure of a
minimal normal extension $N\in \ogr{\kk}$
of $T$ and $P\in \ogr{\kk}$ is the
orthogonal projection of $\kk$ onto
$\hh$. In view of \cite[Proposition~
5]{Ju-St}, a subnormal operator has
exactly one semispectral measure. We also
need the following fact (see
\cite[Proposition~4]{Ju-St}).
   \begin{align}  \label{ifjur}
   \begin{minipage}{55ex}
{\em If $N$ is minimal, then for every
Borel subset $\varDelta$ of $\cbb$,
\phantom{aaaaa} $F(\varDelta) = 0$ if and
only if $E(\varDelta) = 0$.}
   \end{minipage}
   \end{align}
According to \cite[Proposition~
II.4.6]{con91}), the following holds.
   \begin{align} \label{subn-norm}
\text{\em Any subnormal operator is normaloid.}
   \end{align}
We refer the reader to \cite{con91} for the
foundations of the theory of subnormal operators.
   \begin{pro}
Let $S\in\ogr{\hh}$ be a subnormal operator, $N\in
\ogr{\kk}$ be a minimal normal extension of $S$ and
$F$ be the semispectral measure of $S$. Then the
following assertions hold{\em :}
   \begin{enumerate}
   \item[(i)] if $S$ is a contraction, then
$\lim_{n\to\infty}\|S^n h\|^2 = \is{F(\tbb)h}{h}$ for
every $h\in \hh$,
   \item[(ii)] the following conditions are equivalent{\em :}
   \begin{enumerate}
   \item[(ii-a)] $S$ is strongly
stable, i.e., $\lim_{n\to\infty} S^n h = 0$ for every
$h\in \hh$,
   \item[(ii-b)] $S$ is power bounded and $F(\tbb)=0$,
   \item[(ii-c)] $S$ is a contraction and $F(\tbb)=0$,
   \end{enumerate}
   \item[(iii)] $S$ is
strongly stable if and only if $N$ is strongly stable.
   \end{enumerate}
   \end{pro}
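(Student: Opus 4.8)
The plan is to exploit the spectral decomposition of the minimal normal extension $N$ together with the compression formula \eqref{kompr} for the semispectral measure $F$. Since $N$ is normal with spectral measure $E$, we have $\|S^n h\|^2 = \|N^n h\|^2 = \int_{\cbb} |z|^{2n} \, \D\is{E(\cdot)h}{h}$ for every $h\in \hh$, and by \eqref{kompr} this equals $\int_{\cbb} |z|^{2n}\,\D\is{F(\cdot)h}{h}$. The key observation is that \eqref{subn-norm} gives $r(S)=\|S\|$, so that $S$ being a contraction forces the closed support of $F$ (equivalently, by \eqref{ifjur}, the spectrum of $N$) into the closed unit disc $\dbbc$. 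On $\dbbc$ the integrand $|z|^{2n}$ decreases monotonically to the indicator of $\tbb$ as $n\to\infty$, which is the analytic heart of the argument.

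First I would prove (i). Assuming $S$ is a contraction, I would apply the dominated (or monotone) convergence theorem to $\int_{\dbbc} |z|^{2n}\,\D\is{F(\cdot)h}{h}$: on $\dbbc$ one has $|z|^{2n}\to \chi_{\tbb}(z)$ pointwise with uniform bound $1$, and $\is{F(\cdot)h}{h}$ is a finite positive Borel measure, so the integrals converge to $\int_{\dbbc}\chi_{\tbb}\,\D\is{F(\cdot)h}{h} = \is{F(\tbb)h}{h}$. This yields (i) directly.

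Next I would establish the equivalences in (ii). The implication (ii-c)$\Rightarrow$(ii-b) is immediate; (ii-b)$\Rightarrow$(ii-c) follows from \eqref{subn-norm} and Gelfand's formula exactly as in Proposition~\ref{normaloid}, since a power bounded normaloid operator is a contraction. For (ii-c)$\Rightarrow$(ii-a), I would combine (i) with the hypothesis $F(\tbb)=0$: part (i) gives $\lim_{n\to\infty}\|S^n h\|^2 = \is{F(\tbb)h}{h}=0$, which is precisely strong stability. For the converse (ii-a)$\Rightarrow$(ii-c), strong stability forces $\{\|S^n h\|\}$ to be bounded for each $h$, whence $S$ is power bounded by the uniform boundedness principle, hence a contraction by normaloidity; applying (i) again identifies the limit $0$ with $\is{F(\tbb)h}{h}$, so $F(\tbb)=0$ by polarization and positivity.

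Finally, for (iii), I would pass between $S$ and $N$ using \eqref{kompr} and \eqref{ifjur}. If $N$ is strongly stable then $\|S^n h\| = \|N^n h\|\to 0$ for $h\in\hh$, giving strong stability of $S$. For the reverse direction I would argue that strong stability of $S$ gives $F(\tbb)=0$ by (ii), and then \eqref{ifjur} yields $E(\tbb)=0$; normality of $N$ together with $\sigma(N)\subseteq\dbbc$ (from contractivity) then forces $\|N^n k\|^2 = \int_{\dbbc}|z|^{2n}\,\D\is{E(\cdot)k}{k}\to \is{E(\tbb)k}{k}=0$ for every $k\in\kk$ by the same dominated-convergence computation as in (i). The main obstacle is the careful bookkeeping that the relevant measures live on $\dbbc$ rather than all of $\cbb$ — this requires invoking normaloidity via \eqref{subn-norm} at each step to control the support, and in (iii) the subtlety is that strong stability of the \emph{minimal} extension must be deduced on all of $\kk$, which is exactly where minimality and \eqref{ifjur} are indispensable.
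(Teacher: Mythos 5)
Your proposal is correct and follows essentially the same route as the paper: the spectral theorem plus dominated convergence for (i), normaloidity (via \eqref{subn-norm}) to pass between strong stability, power boundedness, and contractivity for (ii), and the identities $\|S\|=\|N\|$ and $F(\tbb)=0 \Leftrightarrow E(\tbb)=0$ (from \eqref{ifjur}) for (iii). The only cosmetic difference is that you re-derive the contraction/power-bounded equivalences by hand where the paper simply cites Proposition~\ref{normaloid}, and likewise your part (iii) unwinds what the paper compresses into ``apply (ii) to both $S$ and $N$.''
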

   \begin{proof}
(i) Suppose $\|S\| \Le 1$. Then, by
\cite[Corollary~II.2.17]{con91},
$\|N\|=\|S\|\Le 1$. It follows from the
spectral theorem that
   \allowdisplaybreaks
   \begin{align*}
\|S^n h\|^2 & = \|N^n h\|^2
\overset{\eqref{kompr}}=\int_{\dbbc} |z|^{2n} \is{F(\D
z)h}{h}, \quad n\in \zbb_+, \, h\in \hh,
   \end{align*}
where $\dbbc=\{z\in \cbb \colon |z|\Le 1\}$. Since for
$z\in\dbbc$, the sequence
$\{|z|^{2n}\}_{n=0}^{\infty}$ converges to
$\chi_{\tbb}(z)$ as $n \to \infty$, where
$\chi_{\tbb}$ is the characteristic function of
$\tbb$, we deduce from Lebesgue's dominated
convergence theorem that (i) holds.

(ii) Since subnormal operators are
normaloid (see \eqref{subn-norm}), the
assertion (ii) follows from (i) and
Proposition~\ref{normaloid}.

(iii) Recall that $\|S\|=\|N\|$, and also
that $F(\tbb)=0$ if and only if
$E(\tbb)=0$, where $E$ is the spectral
measure of $N$ (see \eqref{ifjur}).
Combined with (ii) applied to both $S$
and $N$, this yields (iii).
   \end{proof}
We now estimate the growth of norms of powers of an
algebraic operator whose spectral radius is less than
or equal to $1$.
   \begin{pro}
Suppose that $T\in \ogr{\hh}$ is an algebraic operator
such that $0< r(T) \le 1$. Then there exists
$\alpha\in (0,\infty)$ such that
   \begin{align} \label{estym}
\|T^n\| \Le \alpha\, n^{\kappa} r(T)^n, \quad n\Ge 1,
   \end{align}
where $\kappa=(\deg p)-1$ and $p$ is the minimal
polynomial of $T$.
   \end{pro}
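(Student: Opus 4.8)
The plan is to reduce everything to the block structure supplied by Lemma~\ref{algop}. Applying that lemma and factoring the minimal polynomial as $p(x)=(x-z_1)^{i_1}\cdots(x-z_m)^{i_m}$, I obtain a decomposition $\hh=\hh_1\dotplus\ldots\dotplus\hh_m$ with $T(\hh_j)\subseteq\hh_j$, $\sigma(T)=\{z_1,\ldots,z_m\}$ and $(T_j-z_jI_j)^{i_j}=0$, where $T_j:=T|_{\hh_j}$. In particular $\deg p=i_1+\ldots+i_m$, so $\kappa=i_1+\ldots+i_m-1$, and since $\sigma(T)=\{z_1,\ldots,z_m\}$ we have $r(T)=\max_{1\le j\le m}|z_j|$. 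Writing $N_j:=T_j-z_jI_j$, each $N_j$ is nilpotent with $N_j^{i_j}=0$ and $T_j=z_jI_j+N_j$.

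First I would reduce the global estimate to estimates on the individual blocks. For $h=h_1+\ldots+h_m$ with $h_j\in\hh_j$ one has $T^nh=T_1^nh_1+\ldots+T_m^nh_m$, and combining the triangle inequality with the bound \eqref{lobos} from Lemma~\ref{algop}(ii-e), namely $\|h_j\|\Le c\|h\|$, yields $\|T^nh\|\Le c\big(\sum_{j=1}^m\|T_j^n\|\big)\|h\|$. Taking the supremum over $\|h\|=1$ gives $\|T^n\|\Le c\sum_{j=1}^m\|T_j^n\|$. Thus it suffices to produce, for each $j$, a finite constant $C_j$ with $\|T_j^n\|\Le C_j\,n^{\kappa}r(T)^n$ for all $n\Ge1$, and then to set $\alpha:=c\sum_{j=1}^mC_j$.

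Next I would estimate a single block by the binomial theorem. Since $N_j$ commutes with $z_jI_j$ and $N_j^k=0$ for $k\Ge i_j$,
\begin{align*}
T_j^n=(z_jI_j+N_j)^n=\sum_{k=0}^{\min(n,\,i_j-1)}\binom{n}{k}z_j^{\,n-k}N_j^k,
\end{align*}
whence $\|T_j^n\|\Le\sum_{k=0}^{i_j-1}\binom{n}{k}|z_j|^{n-k}\|N_j\|^k$. Using $\binom{n}{k}\Le n^k\Le n^{\kappa}$ for $n\Ge1$ and $k\Le i_j-1\Le\kappa$, together with $|z_j|^{n-k}=|z_j|^n|z_j|^{-k}\Le r(T)^n|z_j|^{-k}$ in the case $z_j\neq0$, I obtain $\|T_j^n\|\Le n^{\kappa}r(T)^n\sum_{k=0}^{i_j-1}|z_j|^{-k}\|N_j\|^k$, so $C_j$ can be taken to be the finite sum on the right.

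The one point requiring separate care --- and the only real obstacle --- is a block with $z_j=0$, for which the factor $|z_j|^{-k}$ is meaningless. But then $T_j=N_j$ is nilpotent, so $T_j^n=0$ for all $n\Ge i_j$ and the desired inequality is trivial there; only the finitely many indices $1\Le n\Le i_j-1$ remain, and since $r(T)>0$ each ratio $\|T_j^n\|/(n^{\kappa}r(T)^n)$ is a well-defined finite number, so taking $C_j$ to be their maximum settles this block. Here the hypothesis $r(T)>0$ is exactly what guarantees both that some $z_j\neq0$ and that the right-hand side of \eqref{estym} never vanishes for $n\Ge1$. Summing the block estimates through $\|T^n\|\Le c\sum_j\|T_j^n\|$ then yields \eqref{estym} with $\alpha=c\sum_{j=1}^mC_j$; the exponent $\kappa=\deg p-1$ need not be sharp, but it dominates each $i_j-1$, which is all the argument requires.
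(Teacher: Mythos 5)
Your proposal is correct and takes essentially the same route as the paper: decompose $T$ into blocks via Lemma~\ref{algop}, expand each block as $(z_jI_j+N_j)^n$ by the binomial theorem, and absorb the binomial coefficients into $n^{\kappa}r(T)^n$ using $|z_j|\Le r(T)$. The paper compresses your explicit block-by-block reduction (via \eqref{lobos}) and the $z_j=0$ case into the single remark that it suffices to treat one block with $0<|z|\Le 1$, so your added detail merely fills in what the paper leaves to the reader.
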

   \begin{proof}
Since $r(T)>0$, in light of Lemma~ \ref{algop} and its
proof it suffices to consider the case when
$(T-zI)^i=0$ for some $1 \Le i \Le \deg p$ and for
some $z\in \cbb$ such that $0 < |z| \Le 1$. Setting
$N=T-zI$, it is easily seen that
   \begin{align*}
T^n = (zI + N)^n=\sum_{j=0}^{i-1} \binom n j z^{n-j}
N^{j}, \quad n \Ge i-1,
   \end{align*}
which together with $r(T)=|z|$ implies that
   \begin{align*}
\|T^n\| \Le \sum_{j=0}^{i-1} \binom n j \|N^{j}\|
|z|^{n-j} \Le \bigg(\sum_{j=0}^{i-1} \frac{\|N^j\|}{j!
|z|^j}\bigg) n^{i-1} r(T)^n, \quad n \Ge i-1.
   \end{align*}
This completes the proof.
   \end{proof}
   \begin{rem}
a) It is worth mentioning that if $T\in \ogr{\hh}$ is
an algebraic operator such that $r(T)=0$, then, in
view of Lemma~\ref{algop}, the estimate \eqref{estym}
still holds, but only for $n\Ge \deg p$.

b) It follows from Gelfand's formula for
spectral radius that if $T\in \ogr{\hh}$
is such that $r(T) < 1$, then $T$ is
uniformly stable\footnote{A more detailed
discussion of this issue can be found in
\cite[Proposition~6.22]{Kub11}.}, i.e.,
$\lim_{n\to\infty} \|T^n\| = 0$, and
hence the sequence $\{T^n
h\}_{n=0}^{\infty}$ is convergent for
every $h\in\hh$. The latter statement
ceases to be true if the spectrum of $T$
has a nonempty intersection with $\tbb
\setminus \{1\}$, even if $T$ is
algebraic. It could be even worse,
namely, if $T=zI$, where $z=\E^{2\pi\I
\theta}$ and $\theta$ is an irrational
number, then by Jacobi's theorem (see
\cite[Theorem~I.3.13]{Dev}) the closure
of the set $\{T^n h\colon n \in \zbb_+\}$
is equal to $\{z h\colon z\in \tbb\}$ for
every $h\in \hh$.

c) Note that if $T\in \ogr{\hh}$ is algebraic, then
the sequence $\{\|T^n h\|^{1/n} \}_{n=1}^{\infty}$ is
convergent in $\rbb_+$ for all $h \in \hh$ (see
\cite[Proposition~ 6.2]{C-J-S09}). Without the
assumption that $T$ is algebraic, the sequence
$\{\|T^n h\|^{1/n}\}_{n=1}^{\infty}$ may not converge.
For more details on this issue, we refer the reader to
\cite{Dan}.
   \hfill $\diamondsuit$
   \end{rem}
We conclude this section by providing an example
illustrating Theorem~\ref{algunit} and
Proposition~\ref{normaloid}.
   \begin{exa}
Let $N\in \ogr{\hh}$ be a nonzero
operator such that $N^2=0$ and let
$\alpha\in \tbb$. Then the operator
$T_{\alpha}:=\alpha I + N$ is algebraic
because $p(T_{\alpha})=0$ with
$p(x)=(x-\alpha)^2$. Hence, by the
spectral mapping theorem
$\sigma(T_{\alpha}) = \{\alpha\}$ and so
$T_{\alpha}$ is invertible in $\ogr{\hh}$
and $r(T_{\alpha})=1$. However
$\|T_{\alpha}\|
> 1$ whenever $\|N\| > 2$ (this can be achieved simply
by rescaling $N$). As a consequence, $T_{\alpha}$ is
not normaloid, hence not subnormal (see
\eqref{subn-norm}). Observe that $\hh\setminus \jd{N}
\neq \emptyset$ and
   \begin{align} \label{jadro}
\text{$\lim_{n\to \infty} \|T_{\alpha}^n h\| = \infty$
if and only if $h \in \hh\setminus \jd{N}$.}
   \end{align}
Indeed, by Newton's binomial formula (or
simply by induction), we have
   \begin{align*}
(I+\bar \alpha N)^n = I + n \bar \alpha N, \quad n\in
\zbb_+,
   \end{align*}
which implies that
   \begin{align*}
\|(\alpha I+N)^n h\|^2 = \|h\|^2 + 2 n {\mathrm Re}
(\alpha \is{h}{Nh}) + n^2\|Nh\|^2, \quad n\in \zbb_+,
\, h\in \hh.
   \end{align*}
This yields \eqref{jadro}. Since $\hh\setminus \jd{N}
\neq \emptyset$, we infer from \eqref{jadro} that
$T_{\alpha}$ is not power bounded. A simple example of
this kind is $\hh=\cbb^2$,
$N=[\begin{smallmatrix} 0 & 1 \\
0 & 0 \end{smallmatrix}]$ and $\alpha=1$. In this
particular case $\|T_{1}\| > 1$ without rescaling.
   \hfill $\diamondsuit$
   \end{exa}
   \bibliographystyle{amsalpha}
   
   \end{document}